\newtheorem{theorem}{Theorem}[section]
\newtheorem{lemma}[theorem]{Lemma}
\def\square{\vbox{\hrule height.2pt\hbox{\vrule width.2pt height5pt \kern5pt
                                   \vrule width.2pt} \hrule height.2pt}}
\def\stopproof{\qquad\square}
\newenvironment{proof}%
      {\par\medbreak\noindent{\bf Proof:}\
                       \ignorespaces}%
      {\stopproof\ignorespaces\medbreak}
\medbreak\noindent{\bf Proof of #1:}\
\def\path{\gamma}
\newcommand{\ignore}[1]{}
\newcommand{\Exp}{\mathbb{E}}
\begin{document}

\title{How \emph{Not} to Win a Million Dollars: \\
A Counterexample to a Conjecture of L. Breiman}

\author{Thomas P. Hayes\thanks{Department of Computer Science, University of New Mexico,
Albuquerque, NM~87108, U.S.A.\ \ Email:
\hbox{hayes@cs.unm.edu}.}
}
\date{}

\maketitle

\begin{abstract}
Consider a gambling game in which we are 
allowed to repeatedly bet a portion of our
bankroll at favorable odds.  We investigate
the question of how to minimize the expected
number of rounds needed to increase our
bankroll to a given target amount.

Specifically, we disprove a 50-year old conjecture
of L. Breiman~\cite{Breiman}, that there exists a
{\em threshold strategy} that optimizes the expected 
number of rounds; that is, a strategy that always 
bets to try to win in one round whenever
the bankroll is at least a certain threshold, 
and that makes Kelly bets (a simple proportional betting scheme)
whenever the bankroll is below the threshold.
\end{abstract}

\smallskip
{\bf Keywords: }\ Optimal betting, Kelly betting, Algorithms, Counterexample,
Optional stopping, Computer assistance.
\setcounter{footnote}{0}
\setcounter{page}{1}

\section{The Conjecture}

Consider a favorable gambling game, 
such as betting at 3:1 odds on the outcome of a fair coin toss.
If we are allowed to play this as many times as we like (decided adaptively), 
we can eventually increase our winnings to any desired 
target amount, with certainty.  For instance, proportional betting
strategies such as the Kelly criterion (see Kelly~\cite{Kelly} or
Breiman~\cite{Breiman}), have long been known to accomplish this.

Breiman~\cite{Breiman} ``hopefully conjectured'' the following.
Suppose our goal is to achieve a set target bankroll, say \$1M, starting
with a fraction $\xi$ of that amount.  Let $T(\xi)$ be the expected
number of rounds we have to play before we attain our goal.
Then there exists a threshold $0 < \xi_0 < 1$, and an optimal strategy of the
following form:
\begin{itemize}
\item When the current bankroll is less than $\xi_0$, bet to optimize
  $\Exp \log($bankroll$)$.
This is achieved by betting
a particular fraction of the current bankroll, which is only dependent
on the proposition being offered.  This is sometimes known as ``Kelly betting.''
\item When the current bankroll is at least $\xi_0$, bet to reach
the target bankroll in the current round.
\end{itemize}
We will call such strategies ``threshold strategies.''

Breiman describes this conjecture as ``expressing a moderate faith
in the simplicity of things.''  Indeed, his proposed strategy
seems quite plausible.  However, our main result is a proof that
this strategy is not optimal.

\begin{theorem} \label{thm:main}
There exists a favorable gambling game and 
initial bankroll $\xi$, for which the optimal $T(\xi)$ is at most
$13/14$ times that of any threshold strategy.
\end{theorem}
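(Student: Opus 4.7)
The plan is to exhibit an explicit favorable gambling game together with a specific starting bankroll $\xi$, and then to verify by direct computation that some non-threshold strategy achieves expected hitting time at most $13/14$ times that of every threshold strategy. I would choose the game to have a finite menu of rational payoffs, and pick $\xi$ rational so that the set of bankrolls reachable from $\xi$ via a natural finite collection of bet sizes is itself finite. The design principle is to look for a game and starting point where there is some intermediate bankroll at which neither the Kelly bet nor the ``bet to win this round'' bet is optimal; some third bet size must strictly beat both. This immediately forces the optimal strategy to be non-threshold, and it only remains to turn the qualitative gap into a quantitative one.

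Once a candidate game is fixed, I would model the problem as a finite Markov decision process whose states are the reachable bankrolls, whose actions are rational fractions of the current bankroll, and whose per-step cost is $1$, terminating on reaching the target. The expected time of any specific non-threshold strategy $S^{\ast}$ then satisfies a linear system over $\mathbb{Q}$ with one unknown per reachable state, which can be solved exactly to give $\Exp[T_{S^{\ast}}(\xi)]$ as a rational number. Since the behavior of a threshold strategy $S_{\xi_0}$ depends on $\xi_0$ only through which reachable bankrolls fall above or below it, there are only finitely many essentially distinct threshold strategies to consider; for each, the same kind of exact rational solve yields its expected time, and minimizing over admissible thresholds produces $T_{\mathrm{thr}}(\xi)$. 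The theorem then reduces to checking a single rational inequality $\Exp[T_{S^{\ast}}(\xi)] \le (13/14)\,T_{\mathrm{thr}}(\xi)$.

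The hard part of the plan will be finding the right game. For simple single-proposition games such as $3{:}1$ odds on a fair coin, threshold strategies appear at least approximately optimal, so a genuine counterexample almost certainly requires several different propositions available per round, or a single proposition with carefully tuned payoffs and probabilities. I expect to conduct a computer-assisted search over small families of such games and rational starting bankrolls, computing $T^{\ast}$ and $T_{\mathrm{thr}}$ for each candidate and retaining those where the gap is large enough. Two secondary obstacles are worth flagging. First, one must verify that $S^{\ast}$ is genuinely non-threshold, and not a threshold strategy in disguise. Second, one must rule out that some threshold strategy uses a non-rational bet somewhere that escapes the finite state space used for the MDP; this I would handle by noting that both Kelly and bold-play bets are determined by the proposition and the current bankroll, so for rational propositions and rational starting points every bet made by a threshold strategy is itself rational, keeping the reachable state space finite throughout the comparison.
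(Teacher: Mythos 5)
Your proposal is a research plan, not a proof: it never exhibits the game, the starting bankroll, or the non-threshold strategy that the theorem requires, and it defers the crux (``finding the right game'') to an unperformed computer search. That alone would make it incomplete, but there are also two structural problems with the framework you describe.

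First, the finite-MDP reduction does not apply to this kind of problem. Every threshold strategy must Kelly-bet whenever the bankroll is below $\xi_0$, and Kelly betting can lose repeatedly, driving the bankroll to $\xi/2, \xi/4, \xi/8, \dots$ without bound below. So the set of reachable bankrolls under \emph{any} threshold strategy (and under the optimal strategy) is infinite, and one cannot ``solve a linear system with one unknown per reachable state'' by exact rational linear algebra. The paper sidesteps this by proving an exact closed form for $T$ along the Kelly orbit: defining $Y_t = 3\log_2 x(t) - t$ and applying Doob's Optional Stopping Theorem to show $T(1/2^k) = 3k$, with Kelly optimal there. That martingale identity is the load-bearing step; it is what turns the infinite downward tail into a finite, computable quantity, and nothing in your proposal plays that role. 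You would need some analogue of it before any exact comparison of strategies is even well-posed.

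Second, your intuition about where to look is off in a way that would misdirect the search. You conjecture that a single-proposition game with ``modest'' parameters will not work and that one ``almost certainly'' needs several propositions per round. In fact the paper's counterexample is exactly a single-proposition game: a $2/3$-biased coin paid at $2{:}1$ (Kelly bet $= 1/2$), starting bankroll $7/18$, and the winning non-threshold move is the single bet of $5/36$ on the first round (landing on $2/3$ or $1/4$, both of which have known optimal values $T(2/3)=2$, $T(1/4)=6$ from the Kelly identity and a ``bet-to-$1$ above $1/2$'' lemma). That gives $T(7/18) \le 13/3$, while every admissible threshold $\xi_0 \in (1/3, 1/2]$ gives exactly $14/3$. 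Your plan would likely have pruned this family away at the outset. To repair the proposal, you would need (i) a concrete game and bankroll, (ii) an exact-value lemma like the optional-stopping computation to tame the infinite state space, and (iii) a case analysis over the (finitely many \emph{behavioral classes of}) thresholds, rather than a generic MDP solve.
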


We have not attempted here to optimize the constant $13/14$, but it
would be nice to know what its best possible value is.

\section{Preliminaries}

We will use the following version of Doob's Optional Stopping Theorem
(see \cite[Theorem 10.10]{Williams}).

\begin{theorem}[Doob's Optional Stopping Theorem] \label{thm:opt-stop}
Let $(X_t)_{t \ge 0}$ be a supermartingale.
Let $T$ be a stopping time with $\Exp T < \infty$, and suppose
there is a constant $C$ such that, for all $t \ge 1$, $|X_{t+1} - X_t|
\le C$.  Then $X_T$ is integrable and $\Exp X_T \le \Exp X_0$.
\end{theorem}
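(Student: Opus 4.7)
The plan is to reduce the stated theorem to the elementary fact that the stopped process is itself a supermartingale, and then invoke the dominated convergence theorem to pass to the limit in $t$. Write $Y_t := X_{T \wedge t}$ for the process stopped at $T$. A standard computation, using that $\{T \ge t+1\} = \{T \le t\}^c$ is $\mathcal{F}_t$-measurable, shows that $(Y_t)$ is again a supermartingale adapted to the original filtration. In particular, $\Exp Y_t \le \Exp Y_0 = \Exp X_0$ for every finite $t$.

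Next I would exploit the bounded-increment hypothesis to produce an integrable dominating random variable. Since each single-step increment is bounded by $C$ in absolute value,
\[
|Y_t - X_0| \;=\; \Bigl|\sum_{s=1}^{T \wedge t} (X_s - X_{s-1})\Bigr| \;\le\; C\,(T \wedge t) \;\le\; C\,T,
\]
so $|Y_t| \le |X_0| + C\,T$ for all $t$. By the supermartingale assumption $\Exp|X_0| < \infty$, and by hypothesis $\Exp T < \infty$, so this dominating random variable is integrable. Also, $\Exp T < \infty$ forces $T < \infty$ almost surely, so $Y_t \to X_T$ pointwise as $t \to \infty$.

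At this point the dominated convergence theorem applies: $X_T$ is integrable (since $|X_T| \le |X_0| + C\,T$ lies in $L^1$), and $\Exp Y_t \to \Exp X_T$. Combining this with the inequality $\Exp Y_t \le \Exp X_0$ that holds uniformly in $t$ yields $\Exp X_T \le \Exp X_0$, which is exactly the conclusion.

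The only mildly technical step is verifying that $(Y_t)$ is a supermartingale with respect to the original filtration; the rest is routine manipulation of dominated convergence against the envelope $|X_0| + C\,T$. I do not anticipate any real obstacle here, since the bounded-increment hypothesis together with $\Exp T < \infty$ is precisely what is needed to produce an $L^1$ dominating function and kill off any pathology at infinity.
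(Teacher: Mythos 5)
Your proof is correct, and it is essentially the standard argument: the paper does not prove this result but cites it (Theorem 10.10 of Williams), and your route --- the stopped process $X_{T\wedge t}$ is a supermartingale, is dominated by the integrable envelope $|X_0| + CT$, and converges a.s.\ to $X_T$, so dominated convergence gives $\Exp X_T \le \Exp X_0$ --- is precisely the proof given there. One pedantic remark: as the hypothesis is literally stated (bounded increments only for $t \ge 1$), the increment $X_1 - X_0$ is not covered, but since $X_0$ and $X_1$ are integrable you can simply enlarge the dominating variable to $|X_0| + |X_1 - X_0| + CT$, so nothing breaks.
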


\section{Counterexample}

For our gambling game, consider a biased coin which comes up heads
with probability $2/3$.  Suppose we are allowed to bet on heads, and 
be paid off at $2:1$ odds.  That is, for each unit bet, the net change
in our bankroll is $-1$ with probability $1/3$, and $+2$ with
probability $2/3$.

In this case, it is easily checked that the Kelly criterion says to
bet $1/2$ of the current bankroll at each timestep, so that the 
new bankroll will be: half the current bankroll with probability $1/3$, 
and twice the current bankroll with probability $2/3$.

For an initial bankroll, $x$, let $T(x)$ be the expected number of 
plays of this game until a bankroll of at least $1$ is achieved,
under the strategy minimizing this quantity

\begin{lemma} \label{lem:3k}
For our example game, when $x = 1/2^k$, for $k$ a positive integer,
the (unique) optimal strategy is to bet $1/2$ of the current bankroll every 
round until a bankroll of $1$ is reached.  Therefore, $T(1/2^k) = 3k$.
\end{lemma}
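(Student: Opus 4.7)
\medskip
\noindent\textbf{Proof proposal.} Starting from $2^{-k}$, the Kelly strategy keeps $B_t$ on the dyadic grid, so $L_t := \log_2 B_t$ is a biased $\pm 1$ random walk on the integers with upward probability $2/3$, starting at $-k$ and stopped at $0$. The standard random walk analysis (e.g., Wald's identity applied to $L_t - t/3$, or Theorem~\ref{thm:opt-stop}) gives $\Exp T = 3k$ for this strategy, so it suffices to show that no strategy achieves $\Exp T < 3k$.

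The plan is to introduce the potential function $\phi(x) := 3\log_2(1/x)$ for $0 < x < 1$, extended by $\phi(x) := 0$ for $x \ge 1$, so that $\Phi_t := \phi(B_t) + t$ is a submartingale under any strategy. This reduces to proving the one-step Bellman inequality
\[ \phi(x) \le 1 + \tfrac{1}{3}\phi(x - b) + \tfrac{2}{3}\phi(x + 2b) \]
for every $x \in (0, 1)$ and every feasible bet $b \in [0, x]$, with equality iff $b = x/2$. When both post-bet bankrolls remain in $(0, 1)$, setting $u := b/x$ this reduces to $(1 - u)(1 + 2u)^2 \le 2$, uniquely maximized at $u = 1/2$. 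When the winning bankroll reaches or exceeds $1$, it reduces to $4x^3 - 3x + 1 \ge 0$, which factors as $4(x - 1/2)^2 (x + 1)$; the only equality case, $x = 1/2$ with $b = 1/4$, is still the Kelly bet.

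Given the submartingale property, optional stopping at the bounded time $T \wedge n$ gives $\Exp[T \wedge n] + \Exp[\phi(B_{T \wedge n})] \ge \Phi_0 = 3k$. I expect the main obstacle to be passing to the limit $n \to \infty$, since $\phi(B_n)$ can be arbitrarily large on $\{n < T\}$ for aggressive strategies. Assuming $\Exp T < \infty$ (otherwise the lemma is trivial), the crude bound $B_{t+1} \le 3 B_t$ implies $\phi(B_n) \le 3 \log_2 3 \cdot (T - n)$ on $\{n < T\}$. Hence $\phi(B_{T \wedge n})$ is dominated by the integrable random variable $3 \log_2 3 \cdot T$ and tends to $0$ pointwise, so dominated and monotone convergence yield $\Exp T \ge 3k$. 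Uniqueness follows from the equality analysis: $\Exp T = 3k$ forces Bellman equality at every pre-stopping step, which by the case analysis forces $b = B_t/2$ throughout.
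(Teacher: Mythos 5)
Your proof is correct and rests on the same core idea as the paper's: a $\log_2$-based potential plus optional stopping. The paper works with $Y_t = 3\log_2 x_t - t$ (a supermartingale), while you use $\Phi_t = 3\log_2(1/B_t) + t$ truncated at $1$, which is just $-Y_t$ up to the truncation; your explicit verification of the one-step Bellman inequality, including the overshoot case $x + 2b \ge 1$ reducing to $4x^3 - 3x + 1 = 4(x-\tfrac12)^2(x+1) \ge 0$, is more careful than the paper's one-line appeal to ``Kelly maximizes the conditional expectation,'' and the truncation cleans up a small slip in the paper (which writes $Y_\tau = -\tau$ when in fact $Y_\tau \ge -\tau$). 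Where the two proofs genuinely diverge is in handling the hypotheses of optional stopping. The paper's version of Doob's theorem requires a.s.\ bounded increments, and the paper secures this by arguing that any strategy ever betting more than $1 - 3^{-15}$ of its bankroll is strictly dominated, so that one may assume bounded increments without loss of generality; this ``strict improvement'' argument is somewhat informal but has the side benefit of showing optimal strategies never make near-total bets. You instead apply optional stopping at the bounded times $T \wedge n$ and pass to the limit using the crude bound $B_{t+1} \le 3 B_t$, which gives $\phi(B_{T\wedge n}) \le 3 \log_2 3 \cdot T$ as an integrable dominator; this is more direct, fully rigorous, and avoids the need for the improvement argument entirely. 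Both routes are valid; yours is arguably the cleaner write-up of the same underlying proof.
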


\begin{proof}
First we observe that the given strategy results in an expectation of
$3k$ rounds until the bankroll reaches $1$.  This is because the 
base-2 logarithm of the bankroll is a biased random walk on the
negative integers, that moves one unit  towards $0$ with probability $2/3$
at each timestep, and one unit away with probability $1/3$.
The analysis of the hitting time to $0$ for this random walk is standard.

To see that this is the best possible result, consider any 
possible strategy, and let $x(t)$ denote the bankroll after $t$ steps.
Let $\tau$ denote the hitting time to the target bankroll:  $\tau := \min \{t : x(t) \ge 1\}$.
Define $Y_t := 3 \log_2(x(t)) - t$.  Observe that, regardless of the
strategy chosen, $(Y_t)$ is a supermartingale,  since Kelly betting 
maximizes the conditional expectation of $Y_{t+1}$ given $Y_t$,
and, for Kelly betting, $Y_t$ would be a martingale.
Further, note that $Y_0 = - 3k$, and $Y_\tau = - \tau$.

Now, we may assume $\Exp \tau < \infty$, since otherwise
this strategy is clearly worse than Kelly betting.  Suppose
for now that $|Y_{t+1} - Y_t|$ is bounded almost surely.
Then applying the Doob's Optional Stopping Theorem~\ref{thm:opt-stop}, 
we have
$\Exp \tau = - \Exp Y_{\tau} \ge - Y_0 = 3k$, which proves
that, again, the Kelly strategy is superior.

Finally, we will show that any strategy with $|Y_{t+1} - Y_t|$ 
unbounded can be strictly improved upon by another strategy with 
$|Y_{t+1} - Y_t| \le 25$, and therefore our previous assumption 
that the steps taken by $Y$ are bounded was made without loss of
generality.  

First note that $Y_{t+1} \le Y_t + \log_2(3) - 1$ absolutely, since even if we bet the entire
bankroll, we cannot more than triple our stake.
Suppose, for some value of $x(t)$, this strategy bets more than $1 - 3^{-15}$ of 
the bankroll.  Let's look at the expected number of rounds
needed until $x(t') \ge 2 x(t)$.  With probability $2/3$, $t' = t+1$,
but with probability $1/3$, the bankroll initially drops by a 
factor of $3^{15}$, and hence, no matter what, it 
will take more than $15$ rounds to return to
its initial value of $x(t)$.  Thus, in expectation, it takes more than
$6$ rounds to exceed $x(t)$ for the first time.  And, moreover, 
the value of $x(t')$ is in the interval $(x(t), 3x(t))$.

Now, note that Kelly betting quadruples the stake in an
expected $6$ rounds, which clearly dominates the above
strategy.  So, it would be strictly superior to ``bet to double'' until
the stake reaches $4 x(t)$, and then proceed optimally from
that point onward.  Thus, any strategy that ever bets more than
$1 - 3^{-15}$ of its bankroll can be improved upon by one that does
not, which gives us bounds of 
$-15 \log_2(3) - 1 \le Y_{t+1} - Y_t \le \log_2(3) - 1$ on the 
revised strategy, which completes the proof.
\end{proof}



Lemma~\ref{lem:3k} is interesting because it shows that Kelly betting
is in fact optimal for infinitely many starting bankrolls.
Furthermore, when coupled with our next two (easy) lemmas, this actually
implies that there exist threshold strategies for which $T(\xi)$ is within
a constant factor of optimal.  These results are not specific to the
example game chosen for this paper; precise statements and proofs
are left as an exercise to the reader.

On the flip side, as we will see, 
Lemma~\ref{lem:3k} is also the key to our proof that every threshold 
strategy is actually suboptimal.

\begin{lemma} \label{lem:decr} \label{lem:3/2lb}
In general, $T(x)$ is a decreasing function of $x$. 
Furthermore, for our example game, $T(x) > 3/2$ for all $x < 1$.
\end{lemma}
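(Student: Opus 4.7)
The plan is to handle the two parts separately.

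For the monotonicity claim I would use a simple coupling. Given $x < x'$, consider a strategy starting from bankroll $x'$ that sets aside a ``reserve'' of $x' - x$ and plays an optimal strategy for $x$ on the remaining working bankroll. Since the reserve is never wagered, the actual bankroll always exceeds the working bankroll by exactly $x' - x$, so the target is reached no later than under the original strategy, giving $T(x') \le T(x)$.

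For the bound $T(x) > 3/2$, the intuition is that at least one winning flip is required (the bankroll only grows on a win) and the first win in independent $\mathrm{Bernoulli}(2/3)$ trials has expected time $3/2$. To formalize this and obtain the strict inequality simultaneously, I will use a potential-function argument. Define $g(y) = 3/2$ for $y < 1$ and $g(y) = 0$ for $y \ge 1$, and set $Z_t = t + g(x(t))$. A short case analysis on the bet $b_{t+1}$, splitting on whether $b_{t+1} \ge (1-x(t))/2$ (so that a win would reach the target) or not, shows that $Z_t$ is always a submartingale whose conditional expected increment is $0$ in the first case and exactly $+1$ in the second. Since the increments are bounded by $5/2$ and we may assume $\Exp \tau < \infty$, Doob's Optional Stopping (Theorem~\ref{thm:opt-stop}) gives $\Exp \tau = \Exp Z_\tau \ge \Exp Z_0 = 3/2$; in fact $\Exp \tau - 3/2$ equals the expected number of rounds $t+1 \le \tau$ on which $b_{t+1} < (1-x(t))/2$.

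The main obstacle will be showing that this expected count is strictly positive for every strategy. The plan is to use the event $E_N = \{\text{the first } N \text{ flips are losses}\}$, whose probability $(1/3)^N$ does not depend on the strategy. On $E_N$ the bankroll never grows, so $\tau > N$; and if every bet in rounds $1, \ldots, N$ satisfies $b_{t+1} \ge (1-x(t))/2$, then each loss contracts by $1 - x(t+1) \ge (3/2)(1 - x(t))$, giving $1 - x(N) \ge (3/2)^N (1 - x_0)$. Choosing $N$ large enough (which is possible since $x_0 < 1$) so that this exceeds $2/3$ forces $x(N) < 1/3$; then every feasible $b_{N+1} \le x(N)$ satisfies $b_{N+1} < (1-x(N))/2$, so round $N+1$ is a small-bet round, and it occurs strictly before $\tau$ since a small bet cannot reach the target even on a win. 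Either way, $E_N$ witnesses a small-bet round strictly before $\tau$, so $\Exp \tau \ge 3/2 + (1/3)^N > 3/2$ uniformly over strategies.
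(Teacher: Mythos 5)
Your monotonicity argument (set aside a reserve of $x' - x$ and mimic a near-optimal strategy for $x$) is essentially the paper's own observation that ``extra money can always be ignored with no penalty,'' and like the paper it establishes only that $T$ is non-increasing; the paper acknowledges and sketches, but also omits, the strict-decrease part, so you are at the same level of rigor there. For $T(x) > 3/2$ you take a genuinely different route. The paper argues informally: $\tau$ is at least the waiting time for the first heads, whose mean is $3/2$, and the inequality is strict because with positive probability the bets eventually become too small for the first heads to reach the target. Your submartingale $Z_t = t + g(x(t))$ with $g \equiv 3/2$ below the target and $g \equiv 0$ above formalizes and sharpens this: after checking the two bet-size cases (which you do correctly, and with increments bounded by $5/2$ so that Theorem~\ref{thm:opt-stop}, applied to $-Z_t$, is usable), you isolate the slack $\Exp\tau - 3/2$ exactly as the expected number of ``small-bet'' rounds before $\tau$. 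Your analysis of the all-losses event $E_N$ is sound: on $E_N$ we have $\tau \ge N+1$, and either some round $\le N$ already used a small bet, or the contraction $1 - x(t+1) \ge \tfrac{3}{2}(1-x(t))$ forces $x(N) < 1/3$, at which point any feasible bet $b_{N+1} \le x(N) < (1-x(N))/2$ is small. This yields the quantitative bound $T(x) \ge 3/2 + (1/3)^N > 3/2$ with $N$ depending only on $x$, uniformly over strategies. What the paper's argument buys is brevity and intuition; what yours buys is a fully rigorous and quantitative version of the strictness step, which the paper leaves at the level of a plausibility remark. Both are correct; yours could be tightened only by noting explicitly the interchange of expectation and the telescoping sum, which is justified by the bounded increments and the harmless assumption $\Exp\tau < \infty$.
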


\begin{proof}
Clearly $T(x)$ is non-increasing, since extra money can always be
ignored with no penalty.  We omit a detailed proof that $T$ is
strictly decreasing, noting only if there is extra money being ignored,
then after a sufficiently long sequence of consecutive losses under, say, a proportional
betting scheme, this extra money will become the vast majority of the
bankroll, at which point we can appeal to Lemma~\ref{lem:3k} to see
that the expected hitting time to $1$ is strictly better than without
the extra money.

To see that $T(x) > 3/2$, note that we cannot achieve a bankroll of
$1$ without winning at least one coin toss.  But the expected number
of coin tosses until the first heads is $1/(2/3) = 3/2$.  So this is
clearly a lower bound on the hitting time to $1$.  The inequality is
strict, since if we keep losing,
eventually our bets must become too small to 
guarantee reaching $1$ on the first heads.
\end{proof}

\begin{lemma} \label{lem:bet-to-1}
For our example game, any optimal strategy always bets to $1$ when the
bankroll is $x \ge 1/2$.  Additionally, $T(2/3) = 2$ and $T(7/9) =
5/3$.
\end{lemma}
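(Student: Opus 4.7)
The plan is to first establish the structural claim that any optimal strategy bets exactly $(1-x)/2$ whenever the current bankroll is $x \ge 1/2$, and then read off $T(2/3) = 2$ and $T(7/9) = 5/3$ as immediate consequences. Indeed, once the claim is known, the Bellman equation gives $T(x) = 1 + (1/3) T((3x-1)/2)$ for every $x \in [1/2,1)$ (on a win the bankroll is exactly $1$ and the game ends; on a loss it is $(3x-1)/2$), so using $T(1/2) = 3$ from Lemma~\ref{lem:3k} one computes $T(2/3) = 1 + (1/3)(3) = 2$ and then $T(7/9) = 1 + (1/3)(2) = 5/3$.

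To prove the structural claim, I would fix $x \ge 1/2$ and consider the value $V(b) := 1 + (2/3) T(x+2b) + (1/3) T(x-b)$ of a first bet $b \in [0,x]$ (with the convention $T(y) = 0$ for $y \ge 1$), aiming to show $V(b) > V^\star := V((1-x)/2) = 1 + (1/3) T((3x-1)/2)$ for every $b \ne (1-x)/2$. The case $b > (1-x)/2$ is immediate, since the win still stops the game while the loss lands at $x - b < (3x-1)/2$, so $V(b) = 1 + (1/3) T(x-b) > V^\star$ by the strict monotonicity of $T$ from Lemma~\ref{lem:decr}. The interesting case is $b < (1-x)/2$ (including $b = 0$); here $x + 2b < 1$, so Lemma~\ref{lem:3/2lb} gives the strict bound $T(x+2b) > 3/2$, and monotonicity gives $T(x-b) \ge T(x)$, hence $V(b) > 2 + (1/3) T(x)$ strictly. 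Comparing with $V^\star$ then reduces the whole argument to the single auxiliary inequality $T((3x-1)/2) \le T(x) + 3$.

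Establishing this auxiliary inequality is the step I expect to be the main obstacle, and my plan is to prove it by exhibiting an explicit comparison strategy from bankroll $(3x-1)/2$: play Kelly (bet half the bankroll) until the bankroll first reaches at least $x$, then proceed optimally. Under Kelly the bankroll doubles on a win and halves on a loss, and $x/((3x-1)/2) = 2x/(3x-1) \in (1,2]$ whenever $x \ge 1/2$, so the bankroll first attains $\ge x$ precisely at the first time the net number of wins minus losses hits $+1$ --- a standard biased-random-walk hitting time (with $P(+1) = 2/3$) whose expectation is $3$. At that instant the bankroll is exactly $3x-1 \ge x$, so continuing optimally contributes at most $T(x)$ by monotonicity, giving $T((3x-1)/2) \le 3 + T(x)$; this bound is in fact tight at $x = 1/2$ (matching $T(1/4) = 6$ and $T(1/2) = 3$). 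Since $V(b) > 2 + (1/3)T(x)$ is strict while $V^\star \le 2 + (1/3)T(x)$, the desired conclusion $V(b) > V^\star$ follows in every case, closing the proof.
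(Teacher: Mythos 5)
Your proof is correct, but it takes a genuinely different (and more laborious) route than the paper's for the structural claim. Both arguments open the same way: write out the one-step Bellman value of an under-bet $b < (1-x)/2$, invoke Lemma~\ref{lem:3/2lb} to get $T(x+2b) > 3/2$, and invoke monotonicity to compare $T(x-b)$ with $T(x)$, arriving at the bound $1 + \tfrac23 T(x+2b) + \tfrac13 T(x-b) > 2 + \tfrac13 T(x)$. At that point the paper closes the argument with a short self-referential step: since the left side is $T(x)$ itself (the optimal value, under the hypothesis that the optimum under-bets), one gets $T(x) > 3$, which contradicts $T(x) \le T(1/2) = 3$ from Lemmas~\ref{lem:3k} and \ref{lem:decr}. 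You instead compare against the value $V^\star$ of betting exactly to $1$, which forces you to prove the auxiliary inequality $T\bigl((3x-1)/2\bigr) \le T(x) + 3$; your comparison-strategy argument for it (Kelly until the bankroll doubles to $3x-1 \ge x$, a biased-walk hitting time of expectation $3$, then play optimally) is valid, and the check that the walk never crosses $1$ prematurely works out. So you pay for a cleaner head-to-head comparison of strategies with an extra lemma that the paper avoids by exploiting the numerical coincidence $T(1/2)=3$. One thing you do better: you explicitly dispose of over-betting ($b > (1-x)/2$), which the paper's proof of this lemma glosses over (the observation appears only later, in the proof of Lemma~\ref{lem:1/3-1/2}). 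Your derivation of $T(2/3)=2$ and $T(7/9)=5/3$ from the recursion $T(x) = 1 + \tfrac13 T\bigl((3x-1)/2\bigr)$ and $T(1/2)=3$ matches the paper's.
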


\begin{proof}
Suppose for a bankroll $x \in [1/2,1)$, and our strategy bets to some value $y < 1$.
Then, since the first coin flip either results in a bankroll of $y$,
or a bankroll $< x$, we have
\begin{align*}
T(x) &= 1 + \frac{2}{3} T(y) + \frac{1}{3}T(x - (y-x)/2) \\
&> 1 + \frac{2}{3} T(y) + \frac{1}{3}T(x) \\
&> 2 + \frac{1}{3}T(x) & \mbox{By Lemma~\ref{lem:3/2lb}}
\end{align*}
This implies $T(x) > 3$.  But, by Lemma~\ref{lem:3k}, we know that
$T(1/2) = 3$, so we have a contradiction to the fact that $T$ is a decreasing function
(Lemma~\ref{lem:decr}).
Thus the correct strategy must be to bet to $1$. 

Since, from $x=2/3$ or $x=7/9$, the strategy of betting to $1$ 
either results in winning directly, or reaching a bankroll of $1/2$,
and since Lemma~\ref{lem:3k} tells us that $T(1/2) = 3$, an easy
calculation yields the values of $T(2/3)$ and $T(7/9)$, for which we
shall have a use later.
\end{proof}

\begin{lemma} \label{lem:1/3-1/2}
For our example game, if Breiman's conjecture were true, then the
critical threshold $\xi_0$ would necessarily be in $(1/3, 1/2]$.
\end{lemma}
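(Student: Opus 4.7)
The plan is to prove the two bounds separately. The upper bound $\xi_0 \le 1/2$ will follow directly from Lemma~\ref{lem:bet-to-1}, while the lower bound $\xi_0 > 1/3$ will follow from a feasibility constraint on the ``bet to target'' action, together with an absorbing-state argument at $x = 1/3$.

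For the upper bound, I would suppose for contradiction that $\xi_0 > 1/2$ and choose any $x$ with $1/2 < x < \min(\xi_0, 1)$. At bankroll $x$ the threshold strategy Kelly-bets (wagering $x/2$, landing at $x/2$ or $2x$), whereas by Lemma~\ref{lem:bet-to-1} the unique optimal action at $x$ is to bet $(1-x)/2$, landing at $1$ on a win and $(3x-1)/2$ on a loss. Since $x > 1/2$ these two bets are distinct, so starting from $\xi = x$ the threshold strategy takes a strictly suboptimal first action and hence cannot achieve $T(x)$, contradicting Breiman's conjecture.

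For the lower bound, I would first note that the action ``bet to reach the target in one round'' from bankroll $x$ requires a wager of $(1-x)/2$, which is at most the bankroll itself precisely when $x \ge 1/3$. Thus $\xi_0 < 1/3$ would make the threshold strategy infeasible at every bankroll in $[\xi_0, 1/3)$, so we must have $\xi_0 \ge 1/3$. To rule out equality, I would then examine $\xi_0 = 1/3$: at $x = 1/3$ the strategy wagers the entire bankroll and arrives at $0$ with probability $1/3$. Since $T(0) = \infty$, this gives $T_{\text{threshold}}(1/3) = \infty$, whereas $T(1/3) < \infty$ --- for instance, pure Kelly betting from $1/3$ turns $\log_2$-bankroll into a biased random walk with positive drift toward the target region, whose expected hitting time is finite by standard arguments. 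Hence the threshold strategy with $\xi_0 = 1/3$ is strictly suboptimal starting from $\xi = 1/3$, so $\xi_0 > 1/3$.

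The main obstacle is the lower bound, because one must first decide what ``bet to target'' even means at bankrolls where reaching the target in one round is impossible. The feasibility observation $x \ge 1/3$ sidesteps this issue cleanly and pins the threshold down to $\xi_0 \ge 1/3$; the absorbing-state argument at exactly $x = 1/3$ then upgrades this to the strict inequality $\xi_0 > 1/3$.
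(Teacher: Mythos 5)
Your proof is correct and takes essentially the same approach as the paper's: the lower bound $\xi_0 > 1/3$ via the feasibility constraint $(1-x)/2 \le x$ together with the ruin observation at exactly $x = 1/3$ matches the paper's argument verbatim. For the upper bound $\xi_0 \le 1/2$ you route through Lemma~\ref{lem:bet-to-1}, which the paper explicitly notes is a valid alternative to its primary (equivalent) observation that Kelly betting from $x > 1/2$ wastefully overshoots the target on a win.
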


\begin{proof}
Since winning bets are paid off at $2:1$ odds, it is impossible to
``bet to $1$'' with a bankroll of less than $1/3$.  Moreover,
unless the bankroll is strictly greater than $1/3$, we cannot bet
to $1$ without risking the entire bankroll, in which case
$\Exp \tau = + \infty$.  So $\xi_0 > 1/3$.
On the other hand, if $x(t) > 1/2$, then betting the Kelly criterion
(risking half the bankroll) is too much, as winning results in a
bankroll exceeding $1$, which has no added utility.  So $\xi_0 \le 1/2$.
(Lemma~\ref{lem:bet-to-1} also implies $\xi_0 \le 1/2$.)
\end{proof}

\begin{lemma} \label{lem:7/18}
For any $\xi_0 \in (1/3, 1/2]$, playing Breiman's strategy results in $T(7/18) = 14/3$.
\end{lemma}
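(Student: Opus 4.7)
The plan is to split into two cases according to whether $\xi_0 \le 7/18$ or $\xi_0 > 7/18$, since this determines whether Breiman's strategy bets to $1$ or Kelly-bets from the initial bankroll $7/18$. In both cases the key combinatorial fact is that, under Kelly betting, the expected number of rounds for the bankroll to first double equals $3$. This follows from the one-step recurrence: letting $\mu$ denote this expected doubling time, we have $\mu = 1 + (2/3)\cdot 0 + (1/3)\cdot 2\mu$ by scale invariance (with probability $2/3$ we reach $2x$ in one round; with probability $1/3$ we drop to $x/2$ and by the strong Markov property must then traverse two doublings, which takes expected $2\mu$), so $\mu = 3$; equivalently this is Lemma~\ref{lem:3k} applied at $k=1$. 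More generally, going up $d$ doublings takes expected time $3d$. I write $T_B$ for the expected hitting time under Breiman's strategy.

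Case 1: $\xi_0 \in (7/18, 1/2]$. Breiman Kelly-bets from $7/18$, so each subsequent bankroll is of the form $(7/9)\cdot 2^{-j}$ for some integer $j \ge 0$. Since $j \ge 1$ gives bankroll at most $7/18 < \xi_0$, Kelly betting continues until the walk first reaches $j=0$, i.e.\ bankroll $7/9$. By the key fact this first-passage takes expected $3$ rounds, and $T_B(7/9) = T(7/9) = 5/3$ by Lemma~\ref{lem:bet-to-1}. Hence $T_B(7/18) = 3 + 5/3 = 14/3$.

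Case 2: $\xi_0 \in (1/3, 7/18]$. Breiman bets to $1$ from $7/18$; losing (probability $1/3$) drops the bankroll to $(3\cdot 7/18 - 1)/2 = 1/12$. From $1/12$ every subsequent bankroll is of the form $(1/12)\cdot 2^k$ for some integer $k$, and the values with $k \le 2$ are all at most $1/3 < \xi_0$ (using $\xi_0 > 1/3$), so Breiman Kelly-bets until the walk first reaches $(1/12)\cdot 2^3 = 2/3$; this requires three doublings and hence takes expected $9$ rounds. Then $T_B(2/3) = T(2/3) = 2$ by Lemma~\ref{lem:bet-to-1}, giving $T_B(1/12) = 9 + 2 = 11$ and $T_B(7/18) = 1 + (1/3)(11) = 14/3$.

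The main technical point is the ``three rounds per doubling'' fact, together with the bookkeeping that no element of either Kelly subwalk accidentally crosses $\xi_0$ before the expected exit; both reduce to elementary checks given the hypothesis $\xi_0 \in (1/3, 1/2]$. The fact that both cases give the same numerical answer $14/3$ is not surprising in hindsight, since the initial bet-to-1 move in Case 2 followed by a loss to $1/12$ is compensated exactly by the extra doublings the Kelly walk must traverse before re-entering the bet-to-1 region.
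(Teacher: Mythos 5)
Your proof is correct and follows essentially the same route as the paper: the same case split on whether $\xi_0$ lies above or below $7/18$, the same reduction to Lemma~\ref{lem:3k} (doubling up takes $3$ rounds in expectation) and Lemma~\ref{lem:bet-to-1} ($T(2/3)=2$, $T(7/9)=5/3$), and the same arithmetic. Your extra bookkeeping — checking explicitly that the Kelly subwalk never strays above $\xi_0$ before its intended exit, which is where the hypothesis $\xi_0 > 1/3$ is used — is only a slight expansion of a parenthetical remark in the paper's argument.
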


\begin{proof}
We consider two cases.  {\bf Case A:}\   $\xi_0 \le 7/18$.  In this case, we
first bet to $1$, winning the game with probability $2/3$, and
otherwise losing $11/36$, for a new bankroll of $1/12$.
Assuming we lost, we will now risk half the bankroll at each step,
until our bankroll has doubled up to a value of $2/3$.  (Note that,
by Lemma~\ref{lem:1/3-1/2}, $\xi_0 > 1/3$.)  Note that, by 
Lemma~\ref{lem:3k}, doubling up thrice takes $9$ rounds in
expectation.  Since Lemma~\ref{lem:bet-to-1} tells us $T(2/3) = 2$,
we can now compute
\[
T(7/18) = 1 + \frac13 \left(9 + T(2/3)\right) = \frac{14}{3}.
\]

\noindent{\bf Case B:}\  $\xi_0 > 7/18$.  In this case, we first bet to double up to
$7/9$, which, by Lemma~\ref{lem:3k} takes 3 rounds in expectation.
Since Lemma~\ref{lem:bet-to-1} tells us $T(7/9) = 5/3$, we therefore
have
\[
T(7/18) = 3 + T(7/9) = \frac{14}{3},
\]
just as in Case A, completing the proof.
\end{proof}

Combining Lemmas~\ref{lem:1/3-1/2} and \ref{lem:7/18} shows us that,
if Breiman's conjecture were true, then $T(7/18) = 14/3$ for our example
game.  Our next result contradicts this.

\begin{lemma}
For our example game, $T(7/18) \le 13/3$.
\end{lemma}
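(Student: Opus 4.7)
The plan is to exhibit a single explicit non-threshold bet from bankroll $7/18$ whose two possible outcomes both land on bankrolls whose optimal expected hitting times have already been determined in Lemmas~\ref{lem:3k} and~\ref{lem:bet-to-1}. Since both Breiman strategies evaluated at $7/18$ yielded $14/3$, it suffices to demonstrate one concrete alternative strategy achieving at most $13/3$.

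First I would work out the arithmetic constraints. If from bankroll $7/18$ we bet an amount $b$, then on a win (probability $2/3$) the new bankroll is $7/18 + 2b$, and on a loss (probability $1/3$) it is $7/18 - b$. I want the win-bankroll to fall into $\{2/3, 7/9\}$ so Lemma~\ref{lem:bet-to-1} applies, and I want the loss-bankroll to be of the form $1/2^k$ so Lemma~\ref{lem:3k} applies. Setting the win-bankroll equal to $2/3$ forces $b = 5/36$, which happily makes the loss-bankroll $7/18 - 5/36 = 1/4 = 1/2^2$. Note that this bet equals neither the Kelly bet $7/36$ (which would land on $7/9$ or $7/36$) nor the bet-to-$1$ amount $11/36$, so it is not used by any threshold strategy at this bankroll.

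Second, by conditioning on the first coin toss under this specific strategy and then continuing optimally afterward, we get
\[
T(7/18) \;\le\; 1 + \frac{2}{3}\, T(2/3) + \frac{1}{3}\, T(1/4) \;=\; 1 + \frac{2}{3}\cdot 2 + \frac{1}{3}\cdot 6 \;=\; \frac{13}{3},
\]
using $T(2/3) = 2$ from Lemma~\ref{lem:bet-to-1} and $T(1/4) = 6$ from Lemma~\ref{lem:3k} with $k=2$.

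The main (indeed the only) obstacle is guessing the correct single-round bet; once that is identified, the proof reduces to a three-line calculation. A priori there is no reason to expect that some non-threshold bet from $7/18$ should land both outcomes exactly on bankrolls covered by earlier lemmas, but the arithmetic of $b = 5/36$ hits $2/3$ on a win and $1/4$ on a loss simultaneously, and this coincidence drives the whole counterexample.
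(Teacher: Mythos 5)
Your proposal is correct and is essentially the paper's own proof: bet $5/36$ from $7/18$ so the two outcomes land on $2/3$ and $1/4$, then invoke $T(2/3)=2$ and $T(1/4)=6$ from the earlier lemmas to conclude $T(7/18)\le 1+\tfrac23\cdot 2+\tfrac13\cdot 6=\tfrac{13}{3}$. The only addition is your motivating discussion of how to find the bet $5/36$, which is helpful exposition but not a different argument.
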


\begin{proof}
On the first bet, suppose we bet $5/36$.  If we win, our
bankroll goes up to $2/3$.  
If we lose, our bankroll goes down to $1/4$.
Since Lemmas~\ref{lem:3k} and \ref{lem:bet-to-1} tell us
that with optimal play, 
$T(1/4) = 6$ and $T(2/3) = 2$, we therefore have
\[
T(7/18) \le 1 + \frac23 T(2/3) + \frac13 T(1/4) 
= 1 + \frac23 \; 2 + \frac13 \; 6
= \frac{13}{3}. 
\]
\end{proof}

Combining the above Lemmas shows us that Breiman's conjecture is
false, and with a tiny bit more work, that any strategy meeting
Breiman's form has an expected cost at least $14/13$ of optimal
starting from the initial bankroll of $7/18$, 
thus proving Theorem~\ref{thm:main}

It would be nice to know the ``price'' for playing a threshold
strategy.  We have seen that this is at least $14/13$ for our example
game.  Although we have made no serious attempt to improve this value, it 
seems likely that it is not far off the mark.

\begin{figure}[ht]
\centering
\includegraphics[width=17cm]{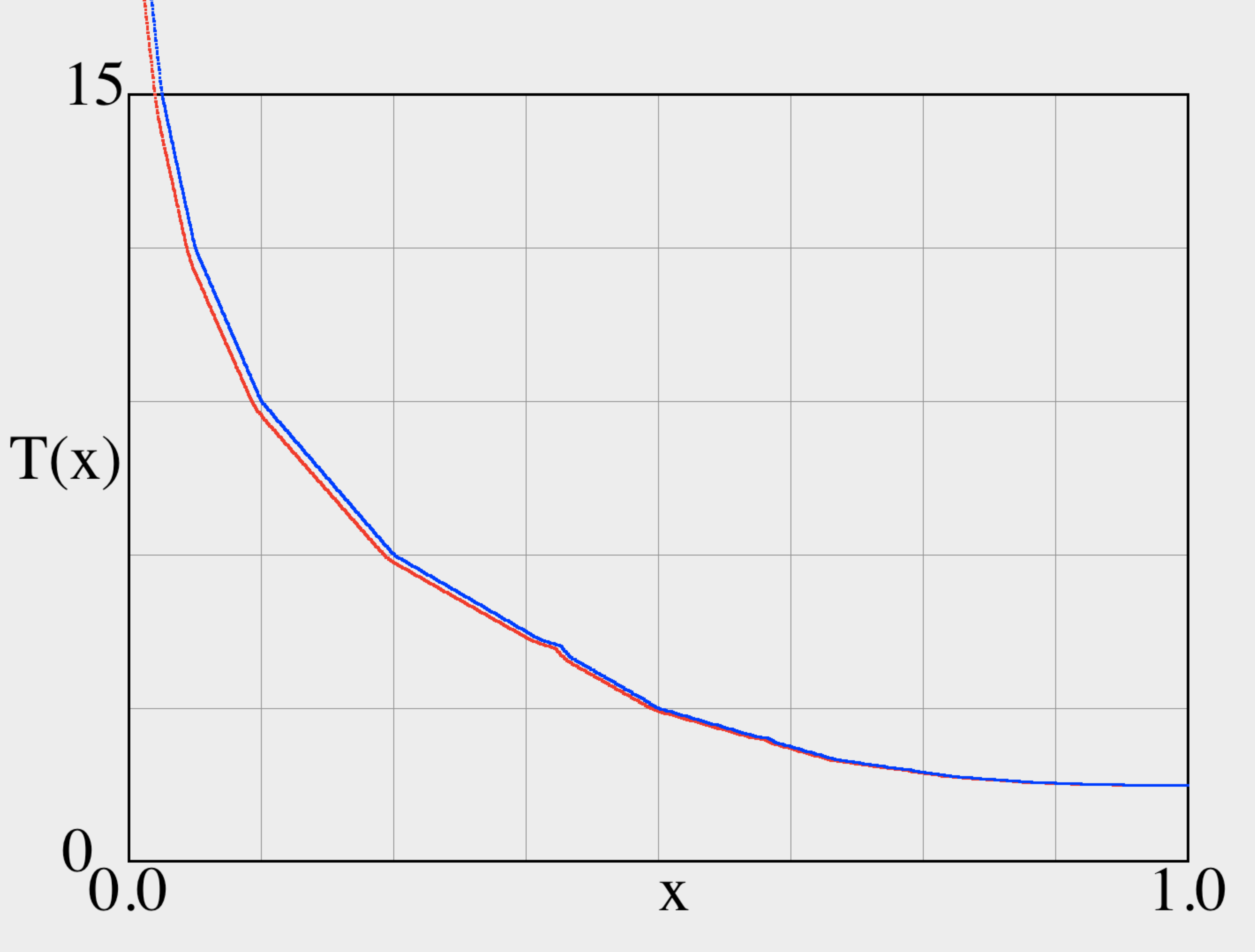}
\caption{Computer-generated upper and lower bounds on the optimal 
  expected number of rounds
  needed to reach a bankroll of $1$ from an initial bankroll of $x$,
  in our example game.  
}
\label{fig:BCopt}
\end{figure}

Figure~\ref{fig:BCopt} shows a graph of upper and lower bounds on the
optimal $T(x)$.  It was obtained by a computer program, which,
beginning with the upper and lower bounds that follow immediately
from Lemma~\ref{lem:3k} and \ref{lem:decr}, recursively derives better
and better upper and lower bounds in terms of the previous ones.
All of the bounds the program works with are step functions, which
allow for exact calculations, up to the precision of the machine arithmetic.
For efficiency reasons, the program frequently approximates these
step functions with more conservative ones, so as to avoid an
exponential growth in the number of distinct function values it has to
keep track of. 

Note that the upper bound is very close to all the values we know
exactly from our lemmas, suggesting that the lower bound remains
too conservative.

Also note that the ``bump'' visible in the graph at $x \approx 0.41$
is not an artifact.  For comparison, $7/18 = 0.388\overline{8}$.
We have no good explanation for this feature of the graph, but it
seems to suggest that optimal betting has some interesting
structure which remains to be understood.

\section*{Acknowledgments}

I would like to thank Yuval Peres and Evangelos Georgiadis for
introducing me to this problem and for helpful comments.

\end{document}